\def\A{\mathcal{A}}
\def\B{\mathcal{B}}
\def\F{\mathcal{F}}
\def\M{\mathcal{M}}
\def\N{\mathcal{N}}
\def\S{\mathcal{S}}
\newtheorem{theorem}{Theorem}[section]
\newtheorem{claim}{Claim}
\newtheorem{problem}{Problem}
\theoremstyle{definition}
\newtheorem{definition}{Definition}[section]
\numberwithin{equation}{section}
\title{Intersecting families, signed sets, and injection}
\date{}
\author{Carl Feghali \thanks{Computer Science Institute, Faculty of Mathematics and Physics, Charles University, Prague, Czech Republic \newline email: \texttt{feghali.carl@gmail.com} }}
\begin{document} 

\maketitle  

\begin{abstract}
 Let $k, r, n \geq 1$ be integers, and let $\S_{n, k, r}$ be the family of $r$-signed $k$-sets on $[n] = \{1, \dots, n\}$ given by
$$
\S_{n, k, r} = \Big\{\{(x_1, a_1), \dots, (x_k, a_k)\}: \{x_1, \dots, x_k\} \in \binom{[n]}{k}, a_1, \dots, a_k \in [r] \Big\}.
$$
A family $\A \subseteq \S_{n, k, r}$ is \emph{intersecting} if $A, B \in \A$ implies $A \cap B \not= \emptyset$. A well-known result (first stated by Meyer and proved using different methods by Deza and Frankl, and Bollob\'as and Leader) states that if $\A \subseteq \S_{n, k, r}$ is intersecting, $r \geq 2$ and  $1 \leq k \leq n$, then $$|\A| \leq r^{k-1}\binom{n-1}{k - 1}.$$
We provide a  proof of this result by injection (in the same spirit as Frankl and F\"uredi's and Hurlbert and Kamat's injective proofs of the Erd\H{o}s--Ko--Rado Theorem, and  Frankl's and Hurlbert and Kamat's injective proofs of the Hilton--Milner Theorem)  whenever $r \geq 2$ and $1 \leq k \leq n/2$, leaving open only some cases when $k \leq n$. %Our main tool is the classical Intersection Shadow theorem of Katona.

\end{abstract}

\section{Introduction}

Let $[n] = \{1, \dots, n\}$ and let $\binom{[n]}{k}$ denote the collection of all $k$-subsets of $[n]$. Sets of sets are called \emph{families}. A family $\F \subseteq 2^{[n]}$ is \emph{intersecting} if $F, F' \in \F$ implies $F \cap F' \not= \emptyset$. How large can an intersecting family $\F \subseteq \binom{[n]}{k}$ be? If $2k > n$ then $|\F| = \binom{n}{k}$ is obvious, while if $2k \leq n$ then the answer is given by the classical Erd\H{o}s--Ko--Rado Theorem \cite{erdos}. 

\begin{definition}
Let
$$
\S = \Big\{F \in \binom{[n]}{k}: 1 \in F\Big\}.
$$
\end{definition}

\bigskip

\noindent
\textbf{Erd\H{o}s--Ko--Rado Theorem} (Erd\H{o}s, Ko and Rado \cite{erdos}).
\textit{Let $n, k \geq 0$ be integers, $n \geq 2k$. Let $\F \subseteq \binom{[n]}{k}$ be intersecting. 
Then }
\begin{equation}\label{eq:ekr} |\F| \leq \binom{n - 1}{k - 1} = |\S|.\end{equation}

\bigskip

When $n = 2k$, the proof of the Erd\H{o}s--Ko--Rado Theorem is easy. Simply partition $\binom{[2k]}{k}$ into complementary pairs. Then, since $\F$ can contain at most one set from each pair, $|\F| \leq \frac{1}{2}\binom{2k}{k} = \binom{2k - 1}{k - 1}$. To deal with the case $n > 2k$ Erd\H{o}s, Ko and Rado \cite{erdos} introduced an important operation on families called \emph{shifting}.  

A family is called \emph{non-trivial} if there is no element common to all its members. Hilton and Milner \cite{hilton} showed that for $n > 2k$, $\S$ is the unique maximal intersecting family. 

\begin{definition}
Let $G \in \binom{[n]}{k}$, $1 \not\in G$  and 
$$
\N = \{G\} \cup \Big\{F \in \binom{[n]}{k}: 1 \in F, F \cap G \not= \emptyset\}.
$$
\end{definition}

\bigskip

\noindent
\textbf{Hilton--Milner Theorem} (Hilton and Milner \cite{hilton}).
\textit{Suppose that $n \geq 2k \geq 4$ and $\F \subseteq \binom{[n]}{k}$ is non-trivial. Then }
\begin{equation}\label{eq:hm}
|\F| \leq \binom{n - 1}{k - 1} - \binom{n - k - 1}{k - 1} + 1 = |\N|. 
\end{equation} 
\bigskip

There are various proofs of the Erd\H{o}s--Ko--Rado Theorem (cf.\ \cite{ daykin, frankl, hurlbertnew, katonaekr}) and the Hilton--Milner Theorem (cf.\ \cite{franklsimple, franklhm, hurlbertnew}).  To keep this paper short, let us highlight those which are particularly relevant to us: Frankl and F\"uredi's \cite{frankl} and Hurlbert and Kamat's \cite{hurlbertnew} injective proofs of (\ref{eq:ekr}), and Frankl's \cite{franklsimple} and Hurlbert and Kamat's \cite{hurlbertnew} injective proofs of (\ref{eq:hm}). 

We should mention that by ``injective proof" we mean an explicit or implicit injection from $\mathcal{F}$ into a given intersecting family (usually a family whose members contain a prescribed element). We believe that such proofs are of interest, particularly in yielding further insight for the cases when the size of intersecting families cannot be determined \emph{a priori}; as an example of such a case see \cite[Conjecture 1.4]{borgsigned1}.   For further results in extremal set theory, we refer the reader to the excellent monograph by Gerbner and Patkos \cite{gerbner}. 

We now define \emph{signed sets}.  Let $k, r, n \geq 1$ be integers, and let $\S_{n, k, r}$ be the family of $r$-signed $k$-sets on $[n]$ given by
$$
\S_{n, k, r} = \Big\{\{(x_1, a_1), \dots, (x_k, a_k)\}: \{x_1, \dots, x_k\} \in \binom{[n]}{k}, a_1, \dots, a_k \in [r] \Big\}.
$$

A well-known analogue of the Erd\H{o}s--Ko--Rado Theorem for signed sets was first stated by Meyer \cite{meyer}, and later proved by Deza and Frankl \cite{deza} using the shifting technique, and by Bollob\'as and Leader \cite{leader} using Katona's elegant cycle method \cite{katonaekr}. 

\begin{definition}
Let 
$$
\mathcal{W} = \Big\{ W \in \S_{n, k, r}: (1, 1) \in W\Big\}. 
$$
\end{definition}

\begin{theorem}[Deza and Frankl \cite{deza}; Bollob\'as and Leader \cite{leader}]\label{thm:signed}
Let $n, k, r \geq 1$ be integers, $n \geq k$. Let $\F \subseteq \S_{n, k, r}$ be intersecting. Then
\begin{equation}\label{eq:signed}
|\F| \leq r^{k - 1}\binom{n - 1}{k - 1} = |\mathcal{W}|. 
\end{equation}
\end{theorem}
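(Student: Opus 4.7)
My plan is to prove Theorem~\ref{thm:signed} by a Bollob\'as--Leader style cycle-method argument, carried out on the blown-up ground set $[n]\times[r]$ rather than $[n]$ itself. The key observation is that while $n$ may be smaller than $2k$, the hypotheses $r\geq 2$ and $n\geq k$ together force $nr\geq 2k$, so the classical Katona lemma on a cycle of length $nr$ remains available; this is what allows the full range $n\geq k$ to be handled uniformly.

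To each pair $(\pi,(\tau_1,\ldots,\tau_n))\in S_n\times(S_r)^n$ I associate a cyclic arrangement $\phi:\mathbb{Z}_{nr}\to[n]\times[r]$ defined by $\phi((\ell-1)n+j)=(\pi(j),\tau_j(\ell))$ for $j\in[n]$ and $\ell\in[r]$. Because the first coordinate of $\phi$ depends only on position modulo $n$, any window of $k\leq n$ consecutive positions has $k$ distinct first coordinates, so its image is a legitimate element of $\S_{n,k,r}$; call these the arcs of $\phi$. Each $\phi$ has exactly $nr$ arcs and the family of such arrangements has size $n!(r!)^n$. Since $\phi$ is a bijection, two arcs of the same $\phi$ intersect as signed sets iff their windows overlap in $\mathbb{Z}_{nr}$, so the classical cycle lemma on $\mathbb{Z}_{nr}$---valid because $nr\geq 2k$---gives $|\mathcal{F}\cap\mathrm{arcs}(\phi)|\leq k$ for every $\phi$.

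I next double-count pairs $(A,\phi)$ with $A\in\mathcal{F}$ an arc of $\phi$. For a fixed $A=\{(x_1,a_1),\ldots,(x_k,a_k)\}$, placing $A$ as an arc decomposes as: a starting position ($nr$ options), an ordering of $A$'s elements along the arc ($k!$), values of $\pi$ on the $n-k$ positions outside the support ($(n-k)!$), completions of each of the $k$ permutations $\tau_j$ pinned at one value $(((r-1)!)^k)$, and free choices for the remaining $n-k$ permutations $((r!)^{n-k})$. Because $\phi$ is a bijection, $A$ appears as an arc of $\phi$ at most once, so the count is exact. Setting the resulting total against $k\cdot n!(r!)^n$ and simplifying the factorials yields $|\mathcal{F}|\leq r^{k-1}\binom{n-1}{k-1}=|\mathcal{W}|$.

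The step I expect to demand the most care is the bookkeeping in the double count: one has to verify that the parameterisation $(\pi,\tau_1,\ldots,\tau_n)\mapsto\phi$ is injective so that the total $n!(r!)^n$ is correct, and that each signed $k$-set $A$ is counted with the right multiplicity over starting positions. The construction of the arrangements and the Katona-type per-arrangement bound are essentially standard once one realises that the right cycle length is $nr$ rather than $n$---this is exactly what removes the usual $n\geq 2k$ restriction and carries the argument through to the full range $n\geq k$ asserted in the theorem.
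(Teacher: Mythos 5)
Your argument is correct, but it is not the paper's route: what you have reconstructed is essentially the Bollob\'as--Leader cycle-method proof, which the paper only cites for Theorem~\ref{thm:signed} and does not reproduce. Your bookkeeping does check out: the map $(\pi,\tau_1,\dots,\tau_n)\mapsto\phi$ is injective (the first coordinates of $\phi(1),\dots,\phi(n)$ recover $\pi$, and the second coordinates of $\phi((\ell-1)n+j)$ recover $\tau_j$); each $\phi$ is a bijection, so a signed $k$-set occupies a uniquely determined window and hence is an arc of a given $\phi$ at most once; Katona's circle lemma applies because $nr\geq 2k$; and the count $nr\cdot k!\,(n-k)!\,((r-1)!)^k(r!)^{n-k}$ per $A\in\mathcal{F}$, set against $k\cdot n!\,(r!)^n$, simplifies to $|\mathcal{F}|\leq \frac{k}{nr}\binom{n}{k}r^k=r^{k-1}\binom{n-1}{k-1}$. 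The paper's own argument is different in kind: it is after an injective proof (Problem~\ref{problem}), and it proceeds by partitioning $\mathcal{A}$ according to which sign, if any, is attached to the element $1$, rotating signs via $\theta^q_r$ so that $\mathcal{A}'_1,\theta^1_r(\mathcal{A}'_2),\dots,\theta^{r-1}_r(\mathcal{A}'_r)$ become pairwise disjoint, and absorbing the sets avoiding $1$ by Katona's Intersection Shadow Theorem --- which is precisely where the restriction $k\leq n/2$ enters. So your counting proof does not answer Problem~\ref{problem} (it is not an injection into $\mathcal{W}$), but it does establish the theorem over its full range $k\leq n$, which the paper's injection does not. One small caution: as printed the theorem allows $r=1$, for which the bound is false when $k\leq n<2k$; your step ``$r\geq 2$ and $n\geq k$ force $nr\geq 2k$'' silently imports the hypothesis $r\geq 2$ from the abstract, which is the correct reading (and for $r=1$ your argument still goes through exactly when $n\geq 2k$, recovering the Erd\H{o}s--Ko--Rado bound).
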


\bigskip

We should mention that there are several generalisations, extensions and variations of Theorem \ref{thm:signed}; see for example \cite{ahlswede, bey, borgsigned1,  borghilton, borgmultiple, engel1, livingstone}. 

Motivated by the afore-mentioned results we consider the following problem. 

\begin{problem}\label{problem}
Find an injective proof of (\ref{eq:signed}). 
\end{problem}

The object of this paper is to present the following theorem that provides extensive solutions to Problem \ref{problem} leaving open  only some cases when $k \leq n$. 

\begin{theorem}\label{thm:main}
There is an injective proof of (\ref{eq:signed}) whenever $r \geq 2$ and $k \leq n / 2$. 
\end{theorem}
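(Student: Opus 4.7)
The plan is to construct the injection $\Phi: \F \to \mathcal{W}$ by composing two separate injections: one on the first-coordinate supports using a known injective proof of (\ref{eq:ekr}), and one on the sign patterns within each support-fiber using a cyclic shift.

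First I would observe that $\F^{(1)} := \{\pi_1(W) : W \in \F\} \subseteq \binom{[n]}{k}$ is intersecting, since a common element $(x,a) \in W \cap W'$ yields $x \in \pi_1(W) \cap \pi_1(W')$. The hypothesis $k \leq n/2$ places this in the classical EKR regime, so I may invoke the Frankl--F\"uredi or Hurlbert--Kamat injective proof of (\ref{eq:ekr}) to obtain an injection $\psi: \F^{(1)} \to \S$.

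Next, for each $S \in \F^{(1)}$, let $\F_S := \{W \in \F : \pi_1(W) = S\}$, viewed as a family of functions $W: S \to [r]$ where $W(x)$ denotes the sign of $x$ in $W$. Any two $W_1, W_2 \in \F_S$ must agree at some $x \in S$ by the intersecting property. Let $\sigma_S: S \to \psi(S)$ be the order-preserving bijection, so $\sigma_S^{-1}(1) = \min S$ (since $1 = \min \psi(S)$). Identifying $[r]$ with $\mathbb{Z}/r\mathbb{Z}$, I would define
\[
\phi_S(W)(y) \equiv W(\sigma_S^{-1}(y)) - W(\sigma_S^{-1}(1)) + 1 \pmod{r}, \qquad y \in \psi(S),
\]
so that $\phi_S(W)(1) = 1$. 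To see $\phi_S$ is injective, suppose $\phi_S(W_1) = \phi_S(W_2)$; then $W_1(x) - W_2(x)$ is constant modulo $r$ on $x \in S$, and a nonzero constant would force $W_1(x) \neq W_2(x)$ for every $x \in S$, contradicting the fact that $W_1, W_2$ intersect. This is precisely where $r \geq 2$ is used.

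Finally, I would set $\Phi(W) := \{(y, \phi_{\pi_1(W)}(W)(y)) : y \in \psi(\pi_1(W))\}$. This lies in $\mathcal{W}$ since $1 \in \psi(\pi_1(W))$ and the sign at $1$ is $1$ by construction. Global injectivity of $\Phi$ follows immediately: if $\Phi(W_1) = \Phi(W_2)$, then injectivity of $\psi$ forces $\pi_1(W_1) = \pi_1(W_2) = S$, and injectivity of $\phi_S$ then forces $W_1 = W_2$. The main obstacle is the fiber-level sign normalization: naive attempts (such as replacing $(1,b)$ with $(1,1)$ whenever possible) can fail to preserve intersection or injectivity, and the cyclic-shift formula above is what cleanly circumvents this using only the intersecting property of $\F_S$. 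The hypothesis $k \leq n/2$ enters only through the support-level EKR injection, which is precisely why the case $k > n/2$ remains open.
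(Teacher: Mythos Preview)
Your proof is correct and takes a genuinely different, more modular route than the paper's. You decompose the problem cleanly into a support-level injection (handled by invoking an existing injective proof of the unsigned Erd\H{o}s--Ko--Rado theorem as a black box) composed with a per-fibre sign normalisation via a cyclic shift keyed on $\min S$. The paper instead partitions $\A$ according to the sign carried by the element $1$ (or its absence): for the subfamilies $\A_1,\dots,\A_r$ containing $(1,i)$ it applies the global cyclic shift $\theta^{i-1}_r$ to the signs and then shows these shifted families are pairwise disjoint using the cross-intersecting property, while for the subfamily $\A_0$ avoiding element $1$ altogether it re-runs the Frankl--F\"uredi complement-and-shadow argument directly via Katona's Intersection Shadow Theorem. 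Your approach has the advantage of transparently reducing the signed problem to the unsigned one --- any injective proof of (\ref{eq:ekr}) valid for a given range of $(n,k)$ immediately lifts to an injective proof of (\ref{eq:signed}) for that range --- whereas the paper's argument is self-contained and makes the role of Katona's shadow inequality explicit within the signed setting. One small remark: your fibre argument in fact goes through for $r=1$ as well (each fibre then has at most one member), so the claim that $r\geq 2$ is ``precisely'' used there is a slight overstatement, though harmless.
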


%The proof of Theorem \ref{thm:main} is an adaptation of the proof in \cite{frankl}. 

\section{The proof}

One of the main tools in our proof is Katona's \emph{Intersection Shadow Theorem}. For integers $k > s \geq 0$ and a family $\F \subseteq \binom{[n]}{k}$, define its $s$-shadow $\partial_s(\F)$ by 
$$
\partial_s(\F):=\Big\{ G \in \binom{[n]}{s}: \exists F \in \F, G \subset F\Big\}.
$$

Suppose that  $\F \subseteq \binom{[n]}{s}$ such that $|F \cap F'| \geq  t \geq 0$ for all $F, F' \in \F$. Katona \cite{katona} then showed that
\begin{equation}\label{katona}
|\partial_{s - t}(\F)| \geq |\F|. 
\end{equation}

Let $\mbox{mod}^*$ be the usual modulo operation except that for integers $x$ and $y$, $(xy) \mbox{ mod}^* y$ is $y$ instead of $0$. Following Borg \cite{borgsigned1}, for a signed sets $A$ and integers $q$ and $r$, let $\theta^q_r(A)$ be the shifting operation given by
$$
\theta^q_r(A) = \{(x, (a + q) \mbox{ mod}^* r): (x, a) \in A\},
$$
and, for a family $\A$ of signed sets, $$\theta^q_r(\A) = \{\theta^q_r(A): A \in \A\}.$$

\begin{proof}[Proof of Theorem \ref{thm:main}] The proof consists of a nearly straightforward combination of the arguments found in \cite{borgfeghalicycle, frankl}. Let $\A \subseteq \S_{n, k, r}$ be intersecting, let $\A_0 = \{A \in \A: A \cap \{(1, 1), \dots, (1, r)\} = \emptyset\}$ and $\A_i = \{A \in \A: (1, i) \in A\}$ for $1 \leq i \leq r$. Note that $\A_0, \dots, \A_r$ partition $\A$. Let $\A'_0 = \A_0$ and $\A'_i = \{A \setminus \{(1, i)\}: A \in \A_i\}$ for $1 \leq i \leq r$.

Let $\A' = \bigcup_{i = 0}^r \A'_r$. For $A \in \A'$, let $M_A = \{x: (x, a) \in A \}$. We say that $M_A$ \emph{represents} $A$. Let $\mathcal{M}_0 = \{M_A: A \in \A'_0\}$, $\mathcal{M}_1 = \{M_A: A \in \A' \setminus \A'_0\}$, $\N = \{[2, n] \setminus  M: M \in \mathcal{M}_0\}$ and
$$
\B = \Big\{(x_1, a_1), \dots, (x_{k - 1}, a_{k - 1}) : \{x_1, \dots, x_{k - 1}\} \in \partial_{k - 1}(\N), a_1, \dots, a_{k -1} \in [r]\Big\}.
$$

\begin{claim}\label{claim:2}
$|\A'_0| \leq |\B|$.
\end{claim}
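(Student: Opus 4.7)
The plan is to bound $|\A'_0|$ above by $r^{k-1}|\mathcal{M}_0|$ using the shifting maps $\theta^q_r$, and then bound $|\mathcal{M}_0|$ above by $|\partial_{k-1}(\N)|$ using Katona's Intersection Shadow Theorem~(\ref{katona}); multiplying gives $|\A'_0| \le r^{k-1}|\partial_{k-1}(\N)| = |\B|$.

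For the first bound, I would fix $M \in \mathcal{M}_0$ and examine the $r^k$ signed $k$-sets with underlying set $M$. Under the cyclic action $A \mapsto \theta^q_r(A)$, $q \in [r]$, these partition into $r^{k-1}$ orbits of size exactly $r$. For any $A$ in such an orbit and any $q$ with $1 \le q \le r-1$, each first coordinate $x \in M$ occurs exactly once in $A$, and its second coordinate is shifted (modulo $r$) by $q$ in $\theta^q_r(A)$; hence $A \cap \theta^q_r(A) = \emptyset$. Since $\A$ is intersecting, at most one signed set per orbit lies in $\A$, and therefore in $\A_0$. Summing over $M \in \mathcal{M}_0$ gives $|\A'_0| = |\A_0| \le r^{k-1}|\mathcal{M}_0|$.

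For the second bound, $\mathcal{M}_0$ is itself an intersecting family of $k$-subsets of $[2,n]$, because any two signed sets in $\A_0$ sharing a pair $(x,a)$ also share the first coordinate $x$. Consequently, for any $N = [2,n] \setminus M$ and $N' = [2,n] \setminus M'$ in $\N$, we have $|N \cap N'| = (n-1) - |M \cup M'| \ge (n-1) - (2k-1) = n - 2k$. Applying (\ref{katona}) with $s = n-1-k$ and $t = n-2k$ --- which is valid since $k \le n/2$ ensures $t \ge 0$ and $s-t = k-1 \ge 0$ --- yields $|\partial_{k-1}(\N)| \ge |\N| = |\mathcal{M}_0|$. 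Combining, $|\A'_0| \le r^{k-1}|\mathcal{M}_0| \le r^{k-1}|\partial_{k-1}(\N)| = |\B|$.

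The most delicate point is the orbit count in the first step: we need not only that $A \cap \theta^q_r(A) = \emptyset$ for $q \not\equiv 0 \pmod r$, but also that any two distinct shifts $\theta^q_r(A)$ and $\theta^{q'}_r(A)$ are disjoint, so that each orbit really has size $r$ and contributes at most one element to $\A_0$. This follows from the same first-coordinate-uniqueness argument applied after rewriting $\theta^{q'}_r(A) = \theta^{q'-q}_r(\theta^q_r(A))$. Everything else is bookkeeping; the hypothesis $k \le n/2$ enters only through Katona's inequality, consistent with the overall scope of the theorem.
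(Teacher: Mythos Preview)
Your proof is correct and follows essentially the same approach as the paper's: both bound $|\A'_0| \le r^{k-1}|\M_0|$ via the intersecting property, then invoke Katona's shadow inequality with $s = n-1-k$, $t = n-2k$ to get $|\M_0| = |\N| \le |\partial_{k-1}(\N)|$. The only difference is cosmetic: where the paper simply asserts that each $M \in \M_0$ represents at most $r^{k-1}$ members of $\A'_0$, you supply the standard orbit argument under the cyclic shifts $\theta^q_r$ that justifies it.
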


\noindent
\textit{Proof.}
 Since $\A'_0$ is intersecting, 
\begin{equation}\label{eq:intersect} \mbox{each set in $\mathcal{M}_0$ can represent at most $r^{k - 1}$ sets in $\A'_0$.} \end{equation}

Let $N, N' \in \mathcal{N}$. Since $1 \leq k \leq n/2$, we infer $$
|N \cap N'| = |([2, n] \setminus M) \cap ([2, n] \setminus M')| = n - 1 - 2k + |M \cap M'| \geq n - 2k \geq 0,
$$
so that applying (\ref{katona}) with  $s = n - 1 - k$ and $t = n - 2k$ gives us
\begin{equation}\label{eq:bound}
|\M_0| = |\N| \leq |\partial_{k - 1}(\N)|.
\end{equation}
Then (\ref{eq:intersect}) and (\ref{eq:bound}) yield
\begin{equation*}\label{eq:fin}
|\A'_0| \leq r^{k - 1}|\M_0| \leq  r^{k-1}|\partial_{k-1}(\N)| =  |\B|. \qed
\end{equation*}

\begin{claim}\label{claim}
The families $\A'_1, \theta^1_r(\A'_2), \dots, \theta^{r - 1}_r(\A'_{r}), \B$ are pairwise disjoint. 
\end{claim}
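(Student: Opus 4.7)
The plan is to show that a common element in any two of the listed families would contradict the intersecting property of $\A$. First I note that every one of the $r+1$ families consists of $(k-1)$-signed sets supported in $[2,n]$: an element of $\A'_i$ arises from some $A \in \A_i$ by removing its unique pair with first coordinate $1$; the shifts $\theta^{i-1}_r$ act only on second coordinates; and each element of $\B$ has support in $\partial_{k-1}(\N) \subseteq \binom{[2,n]}{k-1}$. Writing $\A'_1 = \theta^0_r(\A'_1)$ lets me treat the first $r$ families uniformly as $\theta^{i-1}_r(\A'_i)$ for $1 \leq i \leq r$, so I need to verify (i) pairwise disjointness among these $r$ families, and (ii) disjointness of each from $\B$.

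For (i), fix $1 \leq i < j \leq r$ and suppose some $B$ lies in both $\theta^{i-1}_r(\A'_i)$ and $\theta^{j-1}_r(\A'_j)$, say $B = \theta^{i-1}_r(A) = \theta^{j-1}_r(A')$ with $A \in \A'_i$ and $A' \in \A'_j$. Then $A \cup \{(1,i)\}$ and $A' \cup \{(1,j)\}$ both lie in $\A$, so since $\A$ is intersecting and $i \neq j$, there exists a pair $(x,a)$ with $x \neq 1$ belonging to $A \cap A'$. The images of this pair under $\theta^{i-1}_r$ and $\theta^{j-1}_r$ both appear in $B$, but $B$ has distinct first coordinates, so these images must coincide, giving $(a + i - 1) \equiv (a + j - 1) \pmod r$. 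Hence $i \equiv j \pmod r$, contradicting $1 \leq i < j \leq r$.

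For (ii), suppose some $B$ lies in both $\theta^{i-1}_r(\A'_i)$ and $\B$. The first membership yields $A^* = A \cup \{(1,i)\} \in \A$ with support $M_B \cup \{1\}$, while membership in $\B$ gives $M_B \subseteq [2,n] \setminus M_{A_0}$ for some $A_0 \in \A'_0 = \A_0$, so $M_B \cap M_{A_0} = \emptyset$. Since $A_0$ contains no pair with first coordinate $1$ and the supports $M_B$ and $M_{A_0}$ are disjoint, we conclude $A^* \cap A_0 = \emptyset$, contradicting that $\A$ is intersecting.

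I expect step (i) to be the main subtlety: the modular contradiction depends on carefully tracking how a shared pair $(x,a) \in A \cap A'$ gives rise to a priori different images in $B$ under the distinct shifts $\theta^{i-1}_r$ and $\theta^{j-1}_r$, and invoking the distinct-first-coordinates rule for a signed set to collapse them. Step (ii), by contrast, is a direct application of the intersecting hypothesis to the sets $A^*$ and $A_0$ extracted from the two memberships.
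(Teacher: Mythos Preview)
Your proof is correct and follows essentially the same approach as the paper: deduce cross-intersection of the $\A'_i$ from the intersecting property of $\A$, derive a contradiction to pairwise disjointness among the shifted families via the same modular argument (the paper phrases this as applying the inverse shifts $\theta^{-(i-1)}_r$, $\theta^{-(j-1)}_r$ to $B$ and asserting the preimages are disjoint, leaving implicit the step you spell out), and handle disjointness from $\B$ by showing supports in $\M_1$ cannot lie in $\partial_{k-1}(\N)$. The only difference is expository: the paper works at the level of the representing sets $\M_0$, $\M_1$ for part~(ii), while you argue directly with the signed sets $A^*$ and $A_0$.
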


\begin{proof}
Since $\A$ is intersecting, 
\begin{equation}\label{eq:cross}
\mbox{for $i, j \in \{0\} \cup [r]$ with $i \not=j$ each set in $\A'_i$ intersects  each set in $\A'_j$.} 
\end{equation}

Suppose there exists $B \in \theta^{i - 1}_r(\A'_i) \cap \theta^{j - 1}_r(\A'_j)$ for some distinct $i, j \in  [2, r]$. Let $A_i = \theta^{-(i - 1)}_r(B) \in \A'_i$ and $A_j = \theta^{-(j - 1)}_r(B) \in \A'_j$. Then $A_i \cap A_j = \emptyset$, which contradicts~(\ref{eq:cross}). 
 Similarly, if we suppose $B \in \A'_1 \cap \theta^{i - 1}_r(\A'_i)$ for some $i \in [2, r]$, then we get a contradiction to (\ref{eq:cross}). Therefore, families $\A'_1, \theta^1_r(\A'_2), \dots, \theta^{r - 1}_r(\A'_{r})$ are pairwise disjoint. By (\ref{eq:cross}), 
each set in $\M_0$ intersects each set in $\M_1$. Therefore $\M_1 \cap \partial_{k - 1}(\N) = \emptyset$, which is to say
$$ \B\cap \bigg( \A'_1 \cup \bigcup_{i = 2}^{r} \theta^{i - 1}_r(\A'_i)\bigg) = \emptyset
$$ and the claim is proved. 
\end{proof}

Let $\A^*_0 = \{B \cup \{(1, 1)\} \colon  B \in \B\}$, $\A_1^* = \A_1$ and $\A^*_i = \{A \cup \{(1, 1)\} \colon A \in \theta^{i - 1}_r(\A'_i) \}$ for $2 \leq i \leq r$. For $0 \leq i \leq r$, $\A^*_i \subseteq \mathcal{W}$. By Claim~\ref{claim}, $\sum_{i = 0}^p |\A^*_i| \leq |\mathcal{W}|$. By Claim \ref{claim:2},
$|\A_0| \leq |\A^*_0|$. We have 
$$|\A| = \sum_{i = 0}^r |\A_i| = |\A_0| + \sum_{i = 1}^{r } |\A_i^*| \leq \sum_{i = 0}^r |\A^*_i| \leq |\mathcal{W}|,$$
and the theorem is proved. 
\end{proof}

\section*{Acknowledgements}

The author was partially supported by grant 
249994 of the research Council of Norway via the project CLASSIS and by grant 19-21082S of the Czech Science Foundation.

\end{document}